\journalname{LION19}
\newtheorem{thm}{Theorem}
\newtheorem{cor}[thm]{Corollary}
\newtheorem{lem}[thm]{Lemma}
\newtheorem{assume}[thm]{Assumption}
\DeclareMathOperator*{\argmin}{argmin}
\newcommand{\R}{\mathbb{R}}
\begin{document}

\title{Time-Varying Multi-Objective Optimization: \\ Tradeoff Regret Bounds}


\author{Allahkaram Shafiei, Jakub Marecek }

\institute{Allahkaram Shafiei and Jakub Marecek \at
             Department of Computer Science, Czech Technical University in Prague \\
              Prague, the Czech Republic
}

\date{Received: date / Accepted: date}

\maketitle

\begin{abstract}
Multi-objective optimization studies the process of seeking multiple competing desiderata in some operation. Solution techniques highlight marginal tradeoffs associated with weighing one objective over others. In this paper, we consider time-varying multi-objective optimization, in which the objectives are parametrized by a continuously varying parameter and a prescribed computational budget is available at each time instant to algorithmically adjust the decision variables to accommodate for the changes.
We prove regret bounds indicating the relative guarantees on performance for the competing objectives.
\end{abstract}
\keywords{Time-varying\and Multi-objective \and Regret Bound\and Proximal Gradient Descent\and }


\section{Introduction and Preliminaries}

During the past two decades, substantial research efforts have been devoted to learning and decision-making in environments with functionally relevant data arriving in a streaming fashion \cite{{Zinkevich2003,Hazan2016,9133310}}, with potentially changing statistical properties. 
In terms of guarantees,
relatively little is known 
about optimization algorithms \cite[e.g.]{goh2009evolutionary,Raquel2013,chen2019novel,9440867,liu2023survey,10644089} considering  multiple criteria and data arriving in a streaming fashion, so far.

Notice that multiple criteria and data arriving in a streaming fashion are a very common setting. 
Indeed, there are both multi-objective and time-varying aspects involved in most engineering problems.   
For example, in trajectory planning for autonomous driving \cite{li2023trajectory},  there are multiple criteria (e.g., safety, fuel efficiency, environmental impact) and time-varying inputs (e.g., from cameras and lidar sensors). 
In design problems in power engineering \cite{Baran1989,7540906}, 
there are multiple criteria, but 
the loads in a power system could vary over time \cite{8442544}, both over slow time scales (e.g., due to migration, relevant in design problems) and faster time scales (less irrelevant in design problems). 
In topology optimization for structural engineering and materials engineering \cite{Marler2004,Ramamoorthy2023}, there are potentially multiple criteria (cost, performance, environmental impact) and forces acting upon a structure could vary over time \cite{Ma2025}.
In coordination of fire-fighting mobile robots \cite{Derenick2007}, there are multiple criteria related to the wild fire, but the wildfire spreads across a country in response to wind, which may be hard to predict.
In machine-learning applications of multi-objective optimization, the time-varying aspects could capture, e.g., time-varying group structure, seasonal or circadian cyclicity, or some form of a concept drift. In game theory, the time-varying aspects could capture time-varying pay-offs (or time-varying price elasticity of the demand) in extensive forms of Stackelberg-like games or time-varying demands in congestion games.

The setting of a dynamically changing and uncertain environment can be analyzed in the framework of online optimization \cite[e.g.]{Zinkevich2003,Hazan2016,9133310},
where the cost function changes over time and an adaptive
decision pertaining only to past information has to be
made at each stage. 
The standard guarantees in online optimization include the level of \emph{regret}, a quantity capturing the difference between the accumulated cost incurred up to some arbitrary time and
the cost obtained from the best fixed point (or some times, policy within a class) chosen in hindsight. We develop such guarantees for time-varying multi-objective optimization.  

Our contributions include:
\begin{itemize}
\item introduction of regret tradeoffs as the appropriate metric for grading solvers for online multi-objective optimization
\item  an on-line proximal-gradient algorithm for handling multiple time-varying convex objectives, which is amenable to analysis, 
\item  theoretical guarantees for the algorithm.
\end{itemize}

\section{Related work}
Proximal Gradient Descent (PGD) is a natural approach to minimize single and multiple objectives. One of the most studied methods for multiobjective optimization problems is steepest descent, for example \cite{Geoffrion1968,Goncalves2022}.
  Subsequently, a proximal point method \cite{Bonnel2005}, which can be applied
to non-smooth problems, was considered. However, this method is just a conceptual scheme and does not necessarily generate subproblems that are easy to solve.
For non-smooth problems, a subgradient method was also developed \cite{Bagirov2013}. A very comprehensive recent paper~\cite{Zhou2020} has presented the regret bounds for classic algorithms for online convex optimization with Lipschitz, but possibly non-differentiable functions, proving a regret of $O(\frac{1}{\sqrt{K}})$, with $K$ iterations at each time instant. With respect to multiobjective (but not online) optimization, Tanabe et al.~\cite{Tanabe2019} proposed proximal gradient methods with and without line searches for unconstrained multiobjective optimization problems, in which every objective function is of the composite form of interest in our work,
$F_i(x)=f_i(x)+g_i(x)$, with $f_i$ smooth and $g_i$ merely proper and convex, but with a tractable proximal computation. 

Next, we describe the literature on online time-varying convex single-objective optimization. As the first innovative paper in this space, Zinkevich \cite{Zinkevich2003} proposed a gradient
descent algorithm with a regret bound of $O(\sqrt{K})$. In the case where the cost functions are strongly convex, the regret limit of the online gradient descent algorithm was further reduced to $O(\log(K))$ with the suitable step size chosen by several online algorithms presented in \cite{Hazan2007}.

 \section{Problem Formulation}
 We begin with describing the problem of Time-Varying Multi-Objective Optimization. Suppose that we have a sequence of convex cost
functions $\phi_{i,t}(x):=f_{i,t}(x)+g_{i,t}(x)$ where $f_{i,t}$ are smooth and $g_{i,t}$ non-smooth. The index $t$ corresponds to the time step and $i$ indexes the objective function among the set of desiderata. 
  Between two time steps $t\in [T]:=\{1,2,...,T\}$, there is a finite amount of time available to compute an optimal decision, suggesting an upper bound on the iteration count for any algorithm, at which point the decision maker must choose an action $x^t\in\mathbb{R}^N$, and the decision maker is faced with a loss of $\phi_{i,t}(x^{t})$. In this scenario, due to insufficient computation time, the decision does not necessarily correspond to the minimizers, and the decision maker faces a so-called regret. Regret is defined as the difference between the accumulated cost over time and the cost incurred by the best-fixed decision when all functions are known in advance, see \cite{Zinkevich2003,Hazan2007,Hazan2017}. Let us consider $\phi_{i,t}(x)=f_{i,t}(x)+g_{i,t}(x)$ as
 \[
F(x,t) = \left(\phi_{1,t}(x),\phi_{2,t}(x),...,\phi_{M,t}(x)  \right)
 \]
 At the time $t$, we consider the following  time-varying vector optimization 
 \begin{equation}\label{eq:prob1}
     \min\limits_{x\in\mathbb{R}^N} \,\left( F(x,t):=F_t(x)
\right) \end{equation}
 where $F:\mathbb{R}^N\to \mathbb{R}^M$ and, each $f_{i,t}$ is $L_{f_{i,t}}$ Lipschitz continuously differentiable and $g_i$ is convex with a simple prox evaluation. Throughout the paper, our discussion of \eqref{eq:prob1} and the proposed algorithm are motivated by the following:
 As can be surmised from the definition, there is rarely a singleton that is a Pareto optimal point. Usually, there is a continuum of solutions. As such, one can consider a \emph{Pareto front} which indicates the set of Pareto optimal points. The front represents the objective values reached by the components of the range of $F(x)$ and it is usually a surface of dimension $m-1$. One can consider it as representing the tradeoffs associated with the optimization problem, to lower $i$'s value, i.e. $f_i(x)$, how much are you willing to compromise in terms of potentially raising $f_j(x)$ for the set of $j\neq i$? 
 
 Because of the generality of the concept of a solution to a vector optimization problem \cite{jahn2010vector}, there are a variety of problems associated, including visualizing the entire Pareto front, or some portion of it, finding \emph{any} point on the Pareto front, or finding some point that satisfies an additional criteria, effectively making this a bilevel optimization problem. 
  In regards to the second option, one can notice that in the convex case, this can be done by solving the so-called ``scalarized" problem:
 \[
 \min\limits_{x\in\mathbb{R}^N} \sum\limits_{i=1}^n \omega_i f_i(x),\,\sum\limits_{i=1}^n \omega_i=1, \,0\le \omega_i\le 1
 \]
 for any valid choice of $\{\omega_i\}$. This reduces the problem to simple unconstrained optimization. This leaves the choice of said constants, however, arbitrary, and thus not all that informative. Although the parameters are weights balancing the relative importance of the objective functions, poor relative scaling across $f_i(x)$ can make an informed choice of $\{\omega_i\}$ insurmountable. For example, if $f_1(x)=1000 x^2$ and $f_2(x) = 0.001 (x-2)^2$, taking $\omega_1=\omega_2=0.5$ clearly pushes the solution of the scalarized problem to prioritize minimizing $f_1(x)$. 
 
 As an additional challenge, we consider the time-varying case, i.e., each $f_i(x)$ changes over time, e.g., due to data streaming with concept drift. With a finite processing capacity at each time instant, we seek an Algorithm that appropriately balances the objectives at each time instant. 
 
 In this paper, we introduce scalarization at the \emph{algorithmic} level for time-varying multi-objective optimization. In particular, at each iteration, we consider computing a set of steps, each of which intends to push an iterate towards the solution of the problem of minimizing $f_i(x)$ exclusively. The algorithm then forms a convex combination of these steps with \emph{a priori} chosen coefficients. We derive \emph{tradeoff regret bounds} indicating how the choice of the said coefficients results in guarantees in regard to suboptimality for every objective. We assert that this would be the most transparently informative theoretical guarantee, in terms of exactly mapping algorithmic choices to comparative performance for every objective function, and as such a natural and important contribution to time-varying multi-objective optimization.


Now we shall present our formal assumptions regarding the problem, in particular the functional properties of $F$ as well as the algorithm we propose and study the properties of in this paper. 
 \begin{assume}[Problem Structure]
 \label{ass1}
\begin{itemize}
	 		\item[$\rm{(i)}$] For all $i,t$ functions $f_{i,t}(\cdot) : \mathbb{R}^N \rightarrow \mathbb{R}$ are continuously 
	 			differentiable such that the gradient is Lipschitz with constant $L_{f_{i,t}}$: \begin{equation*}
	 				\|\nabla f_{i,t} (x) - \nabla f_{i,t}(y)\| \leq L_{f_{i,t}}\|x-y\|,\,\,\,\,\forall x,y\in\mathbb{R}^N.
	 			\end{equation*}
	 		\item[$\rm{(ii)}$] For all $t$, the function $g_{i,t}(\cdot) : \mathbb{R}^N \rightarrow (-\infty,\infty]$ is proper, 
	 			lower semi-continuous, and convex, but not necessarily differentiable. Also, assume that ${\rm dom}(g_{i,t}(\cdot))=\{x\in\mathbb{R}^N:~g_{i,t}(x)<\infty\}$ is non-empty and closed. 
	 			\item [$\rm{(iii)}$] Fo each objective $\phi_{i,t}$, we consider $T_{i,t}(x)={\rm prox}_{C_ig_{i,t}}(x-C_i\nabla f_{i,t}(x))$.
	 	\end{itemize} 
 	\end{assume} 
  We also assume a bound on the magnitude of change between successive times:
 	
 \begin{assume}[Slow Changes] \label{as:varying} 
 The observations as compared to estimates of the function values from the previous time step are bounded at all $x$, i.e.,
 \[
 \mathop {\sup }\limits_{t\ge 1} \max_{i\in[n]} \left\{ \begin{array}{l}
 |f_{i,t+1}(x) - f_{i,t} (x)|,\\
 |g_{i,t+1}(x) - g_{i,t} (x)| 
 \end{array} \right\} \le e
 \]
 \end{assume}

  \section{The Algorithm and Preliminaries}
 An on-line proximal-gradient algorithm is stated formally as Algorithm~\ref{alg:PoEG}. The coefficients $\{\alpha_i\}$ denote the priority of objective $i$, and belong to the unit simplex (i.e.,  $\sum_{i=1}^n \alpha_i=1$, $0\le \alpha_i\le 1$). 
  The following assumptions are typical in the analysis of online algorithms and make real-time algorithmic path-following of solutions feasible.
   In particular, we consider the on-line streaming setting with a finite sampling rate, which we assume permits $K$ iterations of the proximal-gradient steps between two updates of the inputs. 
  
 \begin{assume}[Sufficient Processing Power] \label{as:timeK} 
 At all times $t\in[T]$, the algorithm executes at least $K$ iterations before receiving the new input.
 \end{assume}



We consider two measures of the quality of the solution trajectory:
\begin{enumerate}
\item[$(A)$] 
As a variant of dynamci regret bound of \cite{Zinkevich2003}, we define the dynamic regret bound for the convex combination of $\phi_{i,t}$ as follows:

\[{\mathop{\rm Re}\nolimits} {g_t} = \sum\limits_{t = 1}^T {\phi_{t} ({x^t}) - } \sum\limits_{t= 1}^T {\phi_t(x^{{\rm opt},t})}.\]
In the case of static regret \cite{Hazan2017}, $x^{{\rm opt},t,i}$ is replaced by  $x^{{\rm opt},i}\in\argmin_{x\in X}\sum_{t=1}^T\phi_{i,t}(x)$, i.e, 
\[{\rm S-Reg}_i=\sum_{t=1}^T\phi_{i,t}(x^t)-\min_{x\in X}\sum_{t=1}^T\phi_{i,t}(x)\]
\item[$(B)$] In addition, we will consider the following quantities:
\[\begin{array}{l}
 \begin{array}{*{20}{c}}
   \hfill  \\
   {} \hfill & {\,{\phi _t}(x) := \mathop\sum\limits_{i\in[n]}\alpha_i\phi_{i,t}(x),\,\,\,\,\,\,\,{x^{{\rm{opt}},t}}\in \mathop{\rm argmin}\limits_{x}\phi_t(x)\,\,\,\,} \hfill  \\
\end{array} \\ 
 \,\,\,\,{W_T} := \sum\limits_{t\in[T]}\|x^{\rm{opt},t + 1} - x^{\rm{opt},t}\|^2
 \end{array}.\]
\end{enumerate}


\begin{algorithm}[t!]
    \caption{On-Line Multi-Objective Proximal Gradient Descent}
    \textbf{Input:} Initial iterate $x^1$ solving the problem with data $f_{1,1}(x),g_{1,1}(x)$ parameters $C_1\in(0,\frac{1}{L_{f_{1,1}}}]$, $\alpha_i>0$, and let $x^{1,0}\gets x^1$

    \textbf{\bf for $t=1,2,...,T$ do}  
    
     \quad $x^{t,1}\gets x^{t}$;
     
     \quad 
    Receive data $f_{i,t}(x^t),g_{i,t}(x^t)$;
    
    \quad {\bf for $k=0,1,2,...,K$ do} 
  
  \hspace{1cm} $y^{t,k+1,i}\gets {\rm prox}_{C_{i} g_{i,t}}(x^{t,k}-C_{i}\nabla f_{i,t}(x^{t,k}) )\, \forall i$;\\
  
  \hspace{1cm} $x^{t,k+1}\gets\sum\limits_{i = 1}^n {{\alpha _i}{y^{t,k+1,i}}}$;
  
  \hspace{1cm} $k\gets k+1,~t\gets t+1$
  
   \quad {\bf end for } 
   
  \quad $x^{t+1,0}\gets x^{t,K}$ and $x^{t+1}\gets x^{t,K+1}$; 
  
    {\bf end for} 
    \label{alg:PoEG}
\end{algorithm}

 The following lemma is a key result throughout the paper.
 \begin{lemma}\label{eq:lemmaestimate}
Let $f$ be convex and smooth, and $g$ be non-smooth and $\phi=f+g$ then 
\begin{align}
   & \phi(T(x))-\phi(y)\le \frac{1}{2C}[\|x-y\|^2-\|T(x)-y\|^2]\\& {\rm and} ~~ \phi(T(x))\le \phi(x).
\end{align}
where $T(x)={\rm prox}_{Cg}(x-C\nabla f(x))$, $C\in(0,\frac{1}{L_f}]$ and $L_f$ is Lipschitz constant for $\nabla f$.
 \end{lemma}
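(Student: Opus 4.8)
The plan is to establish this standard ``prox-grad'' inequality by combining three elementary facts: the descent lemma for the smooth part $f$, convexity of $f$, and the subgradient inequality for $g$ evaluated at the prox point. Write $x^+ := T(x) = {\rm prox}_{Cg}(x - C\nabla f(x))$ for brevity throughout.

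First I would extract the optimality condition defining the prox step. Since $x^+$ minimizes $u \mapsto g(u) + \frac{1}{2C}\|u - (x - C\nabla f(x))\|^2$, its first-order condition reads $0 \in \partial g(x^+) + \frac{1}{C}(x^+ - x + C\nabla f(x))$. Rearranging, the vector $\xi := \frac{1}{C}(x - x^+) - \nabla f(x)$ is a subgradient of $g$ at $x^+$; equivalently $\nabla f(x) + \xi = \frac{1}{C}(x - x^+)$. This identification of the gradient-mapping term is the crux of the argument: it is what lets the linear terms telescope at the end.

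Next I would bound $\phi(x^+) - \phi(y)$ term by term. For the smooth part, the descent lemma gives $f(x^+) \le f(x) + \langle \nabla f(x), x^+ - x\rangle + \frac{L_f}{2}\|x^+ - x\|^2$, and since $C \le 1/L_f$ the last coefficient is at most $\frac{1}{2C}$. Convexity of $f$ supplies $f(x) \le f(y) + \langle \nabla f(x), x - y\rangle$, while the subgradient inequality for $g$ at $x^+$ gives $g(x^+) \le g(y) + \langle \xi, x^+ - y\rangle$. Summing these three estimates and invoking $\nabla f(x) + \xi = \frac{1}{C}(x - x^+)$ collapses all the linear contributions into $\frac{1}{C}\langle x - x^+, x^+ - y\rangle$, yielding $\phi(x^+) - \phi(y) \le \frac{1}{C}\langle x - x^+, x^+ - y\rangle + \frac{1}{2C}\|x^+ - x\|^2$.

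Finally I would convert the right-hand side into the squared-norm form by the algebraic (three-point) identity $\|x - y\|^2 = \|x - x^+\|^2 + \|x^+ - y\|^2 + 2\langle x - x^+, x^+ - y\rangle$, which rearranges exactly to $2\langle x - x^+, x^+ - y\rangle + \|x^+ - x\|^2 = \|x - y\|^2 - \|x^+ - y\|^2$; this delivers the first claimed inequality after multiplying by $\frac{1}{2C}$. The second claim is then immediate: setting $y = x$ makes the right-hand side equal to $-\frac{1}{2C}\|x^+ - x\|^2 \le 0$, hence $\phi(T(x)) \le \phi(x)$. I expect the only real (and mild) obstacle to be the careful bookkeeping of the prox optimality condition in the nonsmooth setting — namely choosing $\xi \in \partial g(x^+)$ so that the term $\frac{1}{C}(x - x^+)$ emerges cleanly after summation; the remaining steps are routine convexity estimates and a single algebraic identity.
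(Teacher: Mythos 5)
Your proof is correct and follows essentially the same route as the paper's: the prox optimality condition yielding $\frac{1}{C}(x-x^+)-\nabla f(x)\in\partial g(x^+)$ (the paper phrases this via the gradient mapping $G(x)=\frac{1}{C}(x-T(x))$), combined with the descent lemma under $C\le 1/L_f$, convexity of $f$, and the subgradient inequality for $g$, then the three-point identity to reach the squared-norm form. If anything, your bookkeeping is cleaner than the paper's algebra chain, and you make explicit the step $y=x$ that delivers $\phi(T(x))\le\phi(x)$, which the paper leaves implicit.
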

 \begin{proof}
 Take $G(x):=\frac{1}{C}(x-T(x))$ and apply the standard Descent Lemma:
 \begin{align}\label{eq:r1}
     f(y)\le f(x)+\nabla f(x)^T(y-x)+\frac{L_f}{2}\|x-y\|^2~~~\forall x,y\in\R^N.
 \end{align}
 Plugging $y=x-CG(x)$ in \eqref{eq:r1} one obtains that \begin{align}
     f(x-CG(x))&\le f(x)+\nabla f(x)^T((x-CG(x))-x)+\frac{L_fC^2}{2}\|G(x)\|^2\\
     &\le f(x)-C\nabla f(x)^T(G(x))+\frac{C}{2}\|G(x)\|^2.
 \end{align}
 Now, from $x-CG(x)={\rm prox}_{Cg}(x-C\nabla f(x))$ it follows that \[G(x)-\nabla f(x)\in\partial g(x-CG(x)).\]
 Therefore, for any $y$, by convexity of $g$ we obtain the relation:
 \begin{align}\label{eq:r2}
     g(x-CG(x))\le g(y)-\left(G(x)-\nabla f(x)\right)^T(y-x-CG(x)).
 \end{align}
 Now consider $\phi(T(x))=\phi(x-CG(x)$. By simplifying and applying \eqref{eq:r2}, one has 
 \begin{align*}
     \phi(x-CG(x))&=f(x-CG(x))+g(x-CG(x))\\&\le f(x)-C\nabla f(x)^T(G(x))+\frac{C}{2}\|G(x)\|^2+g(x-CG(x))\\&\le f(y)-\nabla f(x)^T(y-x)-C\nabla f(x)^T(G(x))+\frac{C}{2}\|G(x)\|^2+g(x-CG(x))\\&
     \le f(y)-\nabla f(x)^T(y-x)-C\nabla f(x)^T(G(x))\\&+\frac{C}{2}\|G(x)\|^2+g(y)-(G(x)-\nabla f(x))^T(y-x+CG(x))\\&=
     \phi(y)-\nabla f(x)^T(y-x)-C\nabla f(x)^T(G(x))+\frac{C}{2}\|G(x)\|^2-G(x)^T(y-x)\\&-C\|G(x)\|\nabla f(x)^T(y-x)+C\nabla f(x)^T(G(x))
     \\&\le
     \phi(y)+\frac{1}{2C}[\|x-y\|^2-\|(x-y)-CG(x)\|^2],
 \end{align*} 
 \end{proof}

 \section{ Main Results}
 
 Our main result provides a bound on the expected dynamic regret of the online multi-objective proximal gradient descent (Algorithm 1). Depending on the coefficients $\alpha_i$, there are two cases:
 \begin{enumerate} 
 \item[i)] if for all $i\in[n]$, $\alpha_i\ne0$ and 
 \item[ii)] if there is $i\in[n]$ such that $\alpha_i=1$ and for all $j\ne i$, $\alpha_j=0$. 
 \end{enumerate}
 For case i), we have:
\begin{thm}\label{eq:maintheorem}
 Let $x^t$, (t=1,..., T) be a sequence generated by running Algorithm 1 over $T$ time steps. Then, we have 
 \[Reg = \sum\limits_{t = 1}^T {{\phi _t}({x^t}) - \sum\limits_{t = 1}^T {{\phi _t}} ({x^{{\rm{opt}},t}})}  \le C_T+4(T-1)e+\|x^1-x^{\rm opt,1}\|^2+W_T\]
 where $C_T=\lvert\phi_1(x^1)-\phi_1(x^{\rm opt,T})\rvert$. In addition one has,
 \[Reg = \sum\limits_{t = 1}^T {{\phi _{i,t}}({x^t}) - \sum\limits_{t = 1}^T {{\phi _{i,t}}} ({x^{{\rm{opt}},t}})}\le\frac{1}{\alpha_i}\left[C_T+4(T-1)e+\|x^1-x^{\rm opt,1}\|^2+W_T\right]. \]
\end{thm}
To prove the result, we need a technical lemma:
 \begin{lem}\label{eq:lemma}
 The following holds:
 \begin{enumerate}
\item[a)] For all $t\in [T], k\in[K]$ one has
 \[\|x^{t,k+1}-x^{\rm opt, t}\|\le\|x^{t,k}-x^{\rm opt, t}\|.\]
\item[b)] For all $t\in[T]$ one has $\phi_t(x^{t+1})\le\phi_t(x^t)$ and particularly \[\lvert \phi_t(x^{t+1})-\phi_{t+1}(x^t)\rvert<e.\]
 \item[c)] For all $t\in[T]$, one has $\lvert\phi_t(x)-\phi_{t+1}(x)\rvert<2e$. 
 \end{enumerate}
 \end{lem}
 
 Returning to the proof of the main result, 
\begin{proof}  
    
Utilizing Lemma \ref{eq:lemmaestimate}, one obtains
    \begin{align}
        & \phi_t(x^{t+1})-\phi_t(x^{\rm opt,t}) \nonumber \\
        & =\phi_t(T_t(x^{t,K}))-\phi_t(x^{\rm opt,t})\le\frac{1}{\widetilde C}\left[\|x^{t,K}-x^{\rm opt,t}\|^2-\|T_t(x^{t,K})-x^{\rm opt,t}\|^2\right] \nonumber \\&
        \nonumber=\frac{1}{\widetilde C}\left[\|x^{t,K}-x^{\rm opt,t}\|^2-\|x^{t+1}-x^{\rm opt,t}\|^2\right]\le \frac{1}{\widetilde C}\left[\|x^{t,1}-x^{\rm opt,t}\|^2-\|x^{t+1}-x^{\rm opt,t}\|^2\right]\\&\label{eq:1}
        =\frac{1}{\widetilde C}\left[\|x^t-x^{\rm opt,t}\|^2-\|x^{t+1}-x^{\rm opt,t}\|^2\right]
    \end{align}
    where $T_t(x)=\sum_{i\in[n]}\alpha_iT_{i,t}(x),~ \widetilde C=\sum_{i\in[n]}\alpha_iC_{i}$.
    Alternatively, it is straightforward to verify that  
    \begin{align}\label{eq:inequlity}
        \|x^{t+1}-x^{\rm opt,t}\|^2\ge \|x^{t+1}-x^{\rm opt,t+1}\|^2-\|x^{\rm opt,t+1}-x^{\rm opt,t}\|^2
    \end{align}
    the above combined with \eqref{eq:1} leads to the following 
    \begin{align*}
        \phi_t(x^{t+1})-\phi_t(x^{\rm opt,t})\le\frac{1}{\widetilde C}\left[\|x^t-x^{\rm opt,t}\|^2-\|x^{t+1}-x^{\rm opt,t+1}\|^2+\|x^{\rm opt,t+1}-x^{\rm opt,t}\|^2\right].
    \end{align*}
    Now, summing up the result over  $t\in[T]$ obtains 
    \begin{align}\label{eq:maininequ}
        \sum_{t=1}^T\phi_t(x^{t+1})-\sum_{t=1}^T\phi_t(x^{\rm opt,t})\le\frac{1}{\widetilde C}\left[\|x^1-x^{\rm opt,1}\|^2+W_T\right]
    \end{align}
    which resulted in
    \begin{align}
         \sum_{t=1}^{T-1}\phi_t(x^{t+1})-\sum_{t=1}^{T-1}\phi_t(x^{\rm opt,t})\le \frac{1}{\widetilde C}\left[\|x^1-x^{\rm opt,1}\|^2+W_T\right]
    \end{align}
    on the other side since $\phi_{t+1}(x^{t+1})-2e\le\phi_t(x^{t+1})$, 
    we would have: 

    \begin{align}
    \left ( \sum_{t=1}^T\phi_t(x^t)- \sum_{t=1}^T\phi_t(x^{\rm opt,t})\right)-\phi_1(x^1)-2(T-1)e+\phi_T(x^{\rm opt,T}) \notag \\ \le \frac{1}{\widetilde C}\left[\|x^1-x^{\rm opt,1}\|^2+W_T\right].
    \end{align}
    Summation over $t\in [T]$ yields 
    \begin{align}
        \sum_{t=1}^T\phi_t(x^t)- \sum_{t=1}^T\phi_t(x^{\rm opt,t})\le C_T+4(T-1)e+\frac{1}{\widetilde C}\left[\|x^1-x^{\rm opt,1}\|^2+W_T\right].
    \end{align}
\end{proof}

In the following corollary, it will be shown that for a single objective, the dynamic regret bound is weaker than for a multi-objective case. 
\begin{cor}
In the case that there exists $i$,  $\alpha_i=1$ and for all $j\ne i$ we have $\alpha_j=0$ the problem reduces to time-varying single objective optimization, i.e, 
 $$x^{t,k+1}={\rm prox}_{C_ig_{i,j}}(x^{t,k}-C_i\nabla f_{i,j}(x^{t,k})).$$ 
 Then 
 \begin{align}
\sum_{t=1}^{T}\phi_{i,t}(x^{t})-\phi_{i,t}(x^{{\rm opt},t,i})\le C_T+4(T-1)e+\frac{1}{(K+1)C_i}\left[\|x^1-x^{\rm opt,1}\|^2+W_T\right]. \notag
 \end{align}
 \end{cor}
 \begin{proof}
As can be seen from Lemma \ref{eq:lemmaestimate} we have 
\[\phi_{i,t}(x^{t,k+1})-\phi_{i,t}(x^{{\rm opt},t,i})\le \frac{1}{C_i}[\|x^{t,k}-x^{{\rm opt},t,i}\|^2-\|x^{t,k+1}-x^{{\rm opt},t,i}\|^2].\]
Summing the result over $k$ from $1$ to $K$, we conclude that 
\[\sum_{k=1}^{K}[\phi_{i,t}(x^{t,k+1})-\phi_{i,t}(x^{{\rm opt},t,i})]\le\frac{1}{C_i}[\|x^{t,1}-x^{{\rm opt},t,i}\|^2-\|x^{t,K+1}-x^{{\rm opt},t,i}\|^2], \]
Since $\phi_{i,t}(x^{t,k+1})\le\phi_{i,t}(x^{t,k})$, the previous term becomes 
\[\phi_{i,t}(x^{t,K+1})-\phi_{i,t}(x^{{\rm opt},t,i})\le\frac{1}{(K+1)C_i}[\|x^t-x^{{\rm opt},t,i}\|^2-\|x^{t+1}-x^{{\rm opt},t,i}\|^2].\]
Now, by using inequalities \eqref{eq:inequlity} and \eqref{eq:maininequ}, and subsequently summing the previous inequality over $t$ from 1 to $T$, one establishes the required assertions.
 \end{proof}

If we make an additional assumption regarding the correspondence of the function values and the distance to the solution set, we can obtain guarantees for the latter.
\begin{assume}
   For all $i$ and $t$, $\phi_{i,t}$ satisfies the quadratic
growth property, i.e.,
\[\phi_{i,t}(x)\ge\phi_{i,t}(x^{{\rm opt},t,i})+\frac{\gamma_{i,t}}{2}{\rm dist}^2(x,S^{i,t})~~~~{\rm for~all}~x\in[\phi_{i,t}\le\phi_{i,t}^*+\nu_{i,t}],\]
in which $S^{i,t}$ is the set of all optimal points of $\phi_{i,t}$, and $\phi_{i,t}^*=\phi_{i,t}(x^{{\rm opt},t,i})$ 
\end{assume}

 The following regret bound  can be readily deduced from \cite[Corrolary 3.6]{Drusvyatskiy2018}. It is worth noting that the complexity bound aligns with the linear rate of convergence exhibited by the proximal gradient method when employed for strongly convex functions, albeit with a constant factor.
\begin{cor}
The following regret bound holds
\begin{align}
   Re{g_i} = \sum\limits_{t = 1}^T {{\phi _{i,t}}({x^t}) - \sum\limits_{t = 1}^T {{\phi _{i,t}}} ({x^{{\rm{opt}},t,i}})}  \le T\epsilon
\end{align}
after at most 
\begin{align}\label{eq:max t}
    t\le\frac{\beta_T}{2\nu_T}\Gamma_T+12\frac{\beta_T}{\gamma^0}ln\frac{M_T}{\epsilon}~~~{\rm iterations.}
\end{align}
where 
\begin{align}\label{eq:notations}
&M_T=\min_{t\in[T]}\min_{i\in[n]}\phi_{i,t}(x^1)-\phi_{i,t}(x^{{\rm opt},t,i}),~\Gamma_T=\min_{i\in[n]}\min_{t\in[T]}{\rm dist}(x^1,S^{i,t})\\&\nonumber
\beta_T=\min_{i\in[n]}\min_{t\in[T]}L_{f_{i,t}}, ~\nu_{T}=\min_{i\in[n]}\min_{t\in[T]},~\gamma^0=\min_{i\in[n]}\min_{t\in[T]}\gamma_{i,t},~\nu_T=\min_{i\in[n]}\min_{t\in[T]}\nu_{i,t}.
\end{align}
\end{cor}
\begin{proof}
First, we note that by considering Theorem 3.2 and Corollary 3.6  of \cite{Drusvyatskiy2018}, one can see for all $i,t$ that 
\begin{align}\label{eq:quadratic}
    \phi_{i,t}(x)-\phi_{i,t}(x^{{\rm opt},t,i})\le\epsilon~~~\forall x\in[\phi_{i,t}\le\phi^*_{i,t}+\nu_{i,t}]
\end{align}
for \[t\le\frac{L_{f_{i,t}}}{\nu_{i,t}}{\rm dist}(x^1,S^{i,t})+12\frac{L_{f_{i,t}}}{\gamma_{i,t}}Ln(\frac{\phi_{i,t}(x^1)-\phi_{i,t}(x^{{\rm opt},t,i})}{\epsilon}).\]
Now, assume that $x^t$ is generated by Algorithm \ref{alg:PoEG}. Taking into account  \eqref{eq:notations}, and also taking summation from $t=1$ to $t=T$ from \eqref{eq:quadratic}, one observes that $Reg_i\le T\epsilon$ for at most $t$ defined in \eqref{eq:max t}.
\end{proof}

 
 

\section{Conclusions}
We have studied a time-varying multi-objective optimization problem in a setting, which has not been considered previously. We have shown  properties of a natural, online proximal-gradient algorithm when the processing power between two arrivals of new information is bounded.
Going forward, one could clearly consider alternative uses of the same algorithm (e.g., how many operations one requires per update to achieve a certain bound in terms of the dynamic regret), variants of the algorithm, or completely novel settings. 
In parallel with our work, Tarzanagh and Balzano \cite{Tarzanagh2024a} studied online bilevel optimization under assumptions of strong convexity throughout, which could be seen as one such novel setting.

\paragraph*{Acknowledgements}\quad 
This work has received funding from the European Union’s Horizon Europe research and innovation programme under grant agreement No. GA 101070568.

\clearpage
\bibliographystyle{plain} 
\bibliography{references} 


\end{document}